\newtheorem{theorem}{Theorem}[section]
\newtheorem{corollary}{Corollary}
\newtheorem{lemma}[theorem]{Lemma}
\newtheorem{proposition}{Proposition}
\theoremstyle{definition}
\title[The problem of detecting corrosion] 
      {The problem of detecting corrosion by an electric measurement revisited}
\author[Mourad Choulli and Aymen Jbalia]{}
\subjclass{Primary: 35R30}
 \keywords{Logarithmic stability estimate, detecting corrosion, boundary measurement.}
 \email{mourad.choulli@univ-lorraine.fr}
 \email{jbalia.aymen@yahoo.fr}
\begin{document}
\maketitle

\centerline{\scshape Mourad Choulli}
\medskip
{\footnotesize
 
 \centerline{Institut \'Elie Cartan de Lorraine, UMR CNRS 7502, Universit\'e de Lorraine}
   \centerline{Boulevard des Aiguillettes, BP 70239, 54506 Vandoeuvre les Nancy cedex -}
   \centerline{Ile du Saulcy, 57045 Metz cedex 01, France}
}

\medskip

\centerline{\scshape Aymen Jbalia}
\medskip
{\footnotesize
 \centerline{Facult\'e des Sciences de Bizerte,}
   \centerline{D\'epartement des Math\'ematiques,}
   \centerline{7021 Jarzouna Bizerte, Tunisie}
} 


\bigskip


\begin{abstract}
We establish a logarithmic stability estimate for the problem of detecting corrosion by a single electric measurement. We give a proof based on  an adaptation of the method initiated in \cite{BCJ} for solving the inverse problem of recovering the surface impedance of an obstacle from the scattering amplitude. The key idea consists in estimating accurately  a lower bound of the local $L^2$-norm at the boundary, of the solution of the boundary value problem used in modeling the problem of detection corrosion by an electric measurement.
\end{abstract}


\section{Introduction}

Let $\Omega$ be a $C^3$-smooth bounded domain of $\mathbb{R}^n$, $n=2,3$, so that the following assumption fulfilled: any two points of $\Omega$ can be joined by a broken line consisting of at most $\ell$ segments, where $\ell$ is a given non negative integer. A domain satisfying this property with $\ell =1$ is nothing but a starshaped domain. 

\smallskip
We denote the boundary of $\Omega$ by $\Gamma$ and we consider the following boundary value problem (abbreviated to BVP in the sequel)
\begin{eqnarray}\label{1.1}
\left\{
\begin{array}{lll}
\Delta  u =0\quad &\mbox{in}\; \Omega,
\\
\partial _\nu u +q(x)u=g &\mbox{on}\;  \Gamma .
\end{array}
\right.
\end{eqnarray}

In the sequel, $g\in H^{3/2}(\Gamma )$ and $g$ is non identically equal to zero.

\smallskip
For $s\in \mathbb{R}$ and $1\leq r\leq \infty$, we introduce the vector space
\[
B_{s,r}(\mathbb{R}^{n-1} ):=\{ w\in \mathscr{S}'(\mathbb{R}^{n-1});\; (1+|\xi |^2)^{s/2}\widehat{w}\in 
L^r(\mathbb{R}^{n-1} )\},
\]
where $\mathscr{S}'(\mathbb{R}^{n-1})$ is the space of temperated distributions on $\mathbb{R}^{n-1}$ and
$\widehat{w}$ is the Fourier transform of $w$. Equipped with its natural norm 
\[
\|w\|_{B_{s,r}(\mathbb{R}^{n-1} )}:=\|(1+|\xi |^2)^{s/2}\widehat{w}\|_{ L^r(\mathbb{R}^{n-1} )},
\]
$B_{s,r}(\mathbb{R}^{n-1} )$ is a Banach space (it is noted that $B_{s,2}(\mathbb{R}^{n-1} )$ is merely the usual Sobolev
space $H^s(\mathbb{R}^{n-1} )$). By using local charts and a partition of unity, we construct 
$B_{s,r}(\Gamma )$ from $B_{s,r}(\mathbb{R}^{n-1})$ similarly as $H^s(\Gamma )$ is built 
from $H^s(\mathbb{R}^{n-1})$.

\smallskip
To carry out our analysis, we need solutions of the BVP \eqref{1.1} with some smoothness. In order to give sufficient conditions on the coefficient $q$ guaranteeing this smoothness,  we set
\[
\mathscr{Q}=\{q\in B_{3/2,1}(\Gamma );\; q\geq 0\; \mbox{and}\ q\not\equiv 0 \}
\]
and
\[
\mathscr{Q}_M=\{q\in \mathscr{Q};\; \|q\|_{B_{3/2,1}(\Gamma )}\leq M\},
\]
where $M>0$ is a given constant.

\smallskip
By \cite[Theorem 2.3]{Ch1}, for any $q\in \mathscr{Q}$, the BVP \eqref{1.1} has a unique solution $u_q\in H^3(\Omega )$. Moreover,
\begin{equation}\label{5.11}
\|u_q\|_{H^3(\Omega )}\leq C_0\;\; \mbox{for all}\; q\in \mathscr{Q}_M.
\end{equation}
The constant $C_0$ above can depend only on $\Omega$, $g$ and $M$.

\smallskip
\smallskip
It is worthwhile to mention that, from the classical embedding theorems, $H^3(\Omega )$ is continuously embedded in $C^2(\overline{\Omega})$ if $n=2$ and in $C^{1,1/2}(\overline{\Omega})$ if $n=3$. Due to the regularity of $\Omega$, $C^2(\overline{\Omega})$ is continuously embedded in $C^{1,\beta}(\overline{\Omega})$ for any $0\leq \beta \leq 1$. Therefore $H^3(\Omega )$ is continuously embedded in $C^{1,1/2}(\overline{\Omega})$ when $n=2$ or $3$.

\smallskip
A typical example corresponding to the mathematical analysis we develop in the present work is the problem of detecting the corrosion inside a pipe by electric measurements. This is one of the most important topics in engineering, for instance for administering safely a nuclear power station.

\medskip
Usually, in a BVP modeling the problem of detecting corrosion damage by electric measurements the boundary $\Gamma$ consists in two parts: $\Gamma =\overline{\Gamma _a\cup \Gamma _i}$, $\Gamma _a$ and $\Gamma _i$ being two disjoint relatively open subsets of $\Gamma$. Here, $\Gamma _a$ corresponds to the part of the boundary accessible to measurements and $\Gamma _i$ is the inaccessible part of the boundary where the corrosion damage may occur.  

\smallskip
Henceforth, we assume that the current flux $g$ satisfies $\mbox{supp}(g)\subset \Gamma _a$. The function $q$ in \eqref{1.1} is known as the corrosion coefficient and it is naturally supported on $\Gamma _i$. This motivate the introduction of the following subset of $\mathscr{Q}_M$:
\[
\mathscr{Q}_M^0=\{q\in \mathscr{Q}_M;\; \mbox{supp}(q)\subset \Gamma _i\}.
\]

We are interested in the stability issue for the  problem consisting in the determination of the boundary coefficient $q$ from the boundary measurement $u_q{_{|\gamma}}$, where $\gamma$ is an open subset of the accessible sub-boundary $\Gamma _a$. In the sequel, we assume that $\gamma$ does not meet $\mbox{supp}(g)$:
\[
\gamma \subset \Gamma _a\setminus \mbox{supp}(g).
\]

\smallskip
We aim  to prove the following theorem.  We only sketch the main steps of the proof since the most intermediate results consist in an adaptation of the ones already proved in \cite{BCJ} (see also \cite{Chou}).

\begin{theorem}\label{theorem1.1}
There exist four constants $C_i>0$ and $\sigma _i>0$, $i=0,1$ so that, for any $q,\widetilde{q} \in \mathscr{Q}_M^0\cap C^{0,\alpha} (\Gamma)$, 
\[
\|q-\widetilde{q}\|_{L^\infty (\Gamma )}\le C_0 \Psi \left(C_1 \Phi \left(\| u-\widetilde{u}\|_{L^2(\gamma )}\right)\right),
\]
where $u=u_q$, $\widetilde{u}=u_{\widetilde{q}}$ and
\begin{align*}
\Psi  (\rho )=  \left|\ln \ln \rho \right|^{-\sigma _1}+\rho ,\;\; \rho >0,
\\
\Phi (\rho )= \left|\ln  \rho \right|^{-\sigma _2}+\rho ,\;\; \rho >0.
\end{align*}
\end{theorem}


Theorem \ref{theorem1.1} can be seen as a completion of the results established in \cite{CCL} in dimension two and in \cite{BCC} in dimensions two and three. We note that in the above mentioned works the difference of $q-\widetilde{q}$ is only estimated in a compact subset of $\{x\in \Gamma _i;\; u_q(x)\not=0\}$. However, there is a counterpart in estimating $q-\widetilde{q}$ in the whole $\Gamma_i$. The stability estimates in  \cite{CCL} and \cite{BCC} are of single logarithmic type, while the estimate in Theorem \ref{theorem1.1} is of triple logarithmic type. This means that the stability deteriorates near the points where the solution of the BVP \eqref{1.1} vanishes. 

\smallskip
There is a wide literature treating the problem of detecting corrosion by electric measurements. We refer to \cite{ADR, CFJL, CJ, CCY, Ch2, Ch3, FI, In, Si2, Si3} where various type of stability estimates are given. We just quote these few references, but of course there are many others. A neighbor problem is the one consisting in the determination of the surface impedance of an obstacle from the scattering amplitude (e.g. \cite{ASV, BCJ, Si1} and the reference therein).

\smallskip
To our knowledge, the existing results on stability including the vicinity of the zeroes of the solution, of the BVP under consideration, do not give self contained proofs. They always refer to several previous works. Therefore it is very hard to recover completely the proofs. Although our result seems to be weaker than some of these existing results, our method is direct and it is based only on an elementary Carleman inequality. In fact  our result is not really comparable  with those of the literature because in the existing results $q$ is only estimated on a fixed subset of $\Gamma _i$.

\smallskip
The rest of this text consists in two sections. In section 2 we estimate accurately a lower bound of the local $L^2$-norm at the boundary, of the solution of the BVP \eqref{1.1}. We show, step by step, how we adapt the method in \cite{BCJ} to the present problem. Section 3 is devoted the proof of Theorem \ref{theorem1.1}.

\smallskip
Unless otherwise specified, all the constants we use in the sequel depend only on data.


\section{Lower bound for $L^2$-norm at the boundary}

 We first note that as  $\Omega$ is $C^3$-smooth, it has the uniform interior cone property, abbreviated to the {\bf UICP} in the sequel. That is there are $R>0$ and $\theta \in ]0,\pi[$ satisfying, for all $\widetilde{x}\in \Gamma$, we find $\xi \in \mathbb{R}^n$ such that $|\xi |=1$ and
\[
\mathcal{C}(\widetilde{x})=\{x\in \mathbb{R}^n;\; |x-\widetilde{x}|<R\; \mbox{and}\; (x-\widetilde{x})\cdot \xi >|x-\widetilde{x}|\cos \theta \}\subset \Omega .
\]

The domain $\Omega$  satisfies also the uniform exterior sphere property, abbreviated to {\bf UESP} in the sequel: there exists $\rho >0$ so that, to any $\widetilde{x}\in \Gamma $ corresponds $x_0=x_0(\widetilde{x})\in \mathbb{R}^n\setminus \overline{\Omega}$ for which 
\[
B(x_0,\rho )\cap \Omega =\emptyset\;\;  \textrm{and}\;\; B(x_0,\rho )\cap \overline{\Omega}=\{\widetilde{x}\}.
\]
Additionaly, $\xi$ is defintion of {\bf UICP} can be chosen as follows $\xi =(\widetilde{x}-x_0)/|\widetilde{x}-x_0|$.





\smallskip
 For $\delta >0$, we set
\[
\Omega ^\delta =\{x\in \Omega ;\; \mbox{dist}(x,\Gamma )>\delta \}
\]
and we recall the following useful three sphere inequality for  the $H^1$-norm.
\begin{lemma}\label{lemma3.1}
There exist $C>0$ and $0<s <1$ so that: for all  $u\in H^1(\Omega )$ satisfying $\Delta u=0$ in $\Omega$, $y\in \Omega$ and $0<r< \frac{1}{3 }\mbox{dist}(y,\Gamma )$,  
\[
r\|u\|_{H^1(B(y,2r))}\leq C\|u\|_{H^1(B(y,r))}^s\|u\|_{H^1(B(y,3r))}^{1-s}.
\]
\end{lemma}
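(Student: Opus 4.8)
The inequality is a purely interior statement: the hypothesis $0<r<\frac13\mathrm{dist}(y,\Gamma)$ guarantees $\overline{B(y,3r)}\subset\Omega$, so $u$ is harmonic on all of $B(y,3r)$ and no boundary behaviour intervenes. By translation I may assume $y=0$, and the rescaling $v(z)=u(rz)$, which preserves harmonicity on $B(0,3)$, reduces everything to the unit scale; tracking the different powers of $r$ produced when the $L^2$ and gradient norms are rescaled is exactly what accounts for the factor $r$ on the left-hand side. The plan is therefore first to prove the scale-invariant $L^2$ three-sphere inequality
\[
\|u\|_{L^2(B(y,2r))}\le C\|u\|_{L^2(B(y,r))}^s\|u\|_{L^2(B(y,3r))}^{1-s},
\]
with the intermediate radii slightly adjusted to leave room, and then to upgrade it to the stated $H^1$ form by an interior gradient estimate.

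For the $L^2$ three-sphere inequality I would use a Carleman estimate for the Laplacian with a radial weight. Choosing a strictly decreasing, suitably convexified weight $\varphi(x)=\psi(|x|)$ (for instance $\psi(\rho)=\rho^{-\beta}$ or $\psi(\rho)=-\log\rho$), one has constants $C,\tau_0$ such that
\[
\tau^3\int e^{2\tau\varphi}|w|^2\,dx+\tau\int e^{2\tau\varphi}|\nabla w|^2\,dx\le C\int e^{2\tau\varphi}|\Delta w|^2\,dx
\]
for all $\tau\ge\tau_0$ and all $w\in C_c^\infty(B(0,3r)\setminus\{0\})$. Apply this to $w=\chi u$, where $\chi$ is a cutoff equal to $1$ on the annulus $\{r\le|x|\le 2r\}$ and vanishing on $\{|x|\le r/2\}\cup\{|x|\ge 3r\}$. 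Since $\Delta u=0$, the right-hand side reduces to the commutator $\Delta(\chi u)=2\nabla\chi\cdot\nabla u+u\Delta\chi$, supported only in the two transition annuli $\{r/2\le|x|\le r\}$ and $\{2r\le|x|\le 3r\}$. Bounding $e^{2\tau\varphi}$ from below on $\{r\le|x|\le 2r\}$ by its value at $|x|=2r$ and from above on each transition annulus by its value at the inner edge, one obtains
\[
\tau^3 e^{2\tau\varphi(2r)}\|u\|_{L^2(B(2r)\setminus B(r))}^2\le C\Big(e^{2\tau\varphi(r/2)}\|u\|_{H^1(B(r))}^2+e^{2\tau\varphi(2r)}\|u\|_{H^1(B(3r)\setminus B(2r))}^2\Big).
\]
Optimizing over $\tau$ balances the two exponential factors and produces the H\"older exponent $s\in(0,1)$; adding back $\|u\|_{L^2(B(r))}$ via the trivial inclusion $B(y,r)\subset B(y,3r)$ completes the bound on $\|u\|_{L^2(B(2r))}$, and by the scale invariance of the whole set-up both $s$ and $C$ are independent of $r$.

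To pass from $L^2$ to the stated $H^1$ estimate I would invoke the Caccioppoli inequality for harmonic functions, $r\|\nabla u\|_{L^2(B(y,2r))}\le C\|u\|_{L^2(B(y,5r/2))}$, which is precisely where the prefactor $r$ is absorbed. Combining this with the $L^2$ three-sphere inequality applied to the radii $r$, $5r/2$, $3r$, and with $\|u\|_{L^2}\le\|u\|_{H^1}$, yields
\[
r\|u\|_{H^1(B(y,2r))}\le C\|u\|_{L^2(B(y,5r/2))}\le C\|u\|_{H^1(B(y,r))}^s\|u\|_{H^1(B(y,3r))}^{1-s},
\]
as required. The main obstacle is the Carleman estimate itself: verifying the pseudoconvexity of the radial weight so that the estimate holds with the gain of two powers of $\tau$, and keeping all constants uniform in $r$ through the rescaling to the unit ball. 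For the pure Laplacian this step can alternatively be bypassed by Almgren's frequency function $N(\rho)=\rho\int_{B(y,\rho)}|\nabla u|^2/\int_{\partial B(y,\rho)}u^2$, whose monotonicity for harmonic functions gives the log-convexity of $\rho\mapsto H(\rho)=\int_{\partial B(y,\rho)}u^2$ in $\log\rho$ directly, and hence the three-sphere inequality after integrating in $\rho$.
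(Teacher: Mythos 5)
The first thing to note is that the paper contains no proof of Lemma \ref{lemma3.1} at all: the footnote delegates it to \cite{BCJ} (and to \cite{CT} for general divergence-form operators), and those proofs rest on precisely the Carleman-estimate machinery you outline, so your proposal is essentially a reconstruction of the cited argument rather than a different route. Your global architecture is sound: the hypothesis $3r<\mathrm{dist}(y,\Gamma)$ makes the statement purely interior; the Caccioppoli inequality on the annulus between $2r$ and $5r/2$ is exactly what converts the $L^2$ three-sphere inequality at radii $r$, $5r/2$, $3r$ into the stated $H^1$ form and absorbs the prefactor $r$; and since all radii occur in fixed ratios, scale invariance keeps $C$ and $s$ independent of $r$. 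Your closing observation is also correct and worth emphasizing: for the pure Laplacian, Almgren's frequency function gives log-convexity of $\rho\mapsto\int_{\partial B(y,\rho)}u^2$ in $\log\rho$ with no pseudoconvexity verification, which is the most economical way to the $L^2$ inequality.

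There is, however, one step that fails as displayed. With $\chi=1$ on $\{r\le|x|\le 2r\}$ and the outer transition occupying $\{2r\le|x|\le 3r\}$, the lower bound for the weight on the retained annulus and the upper bound on the outer commutator term are the \emph{same} factor $e^{2\tau\varphi(2r)}$; after dividing through, the large-ball term carries no decaying exponential in $\tau$, so the optimization you invoke cannot balance anything and no H\"older exponent $s\in(0,1)$ emerges. You did announce the cure (``intermediate radii slightly adjusted to leave room'') but did not implement it in the displayed inequality. The standard fix is to let $\chi=1$ on a region extending strictly beyond the ball you keep, e.g.\ $\chi=1$ on $\{r/2\le|x|\le 5r/2\}$ with transitions in $\{r/4\le|x|\le r/2\}$ and $\{5r/2\le|x|\le 3r\}$: the retained region $B(y,2r)\setminus B(y,r/2)$ then carries weight at least $e^{2\tau\psi(2r)}$ while the outer term carries at most $e^{2\tau\psi(5r/2)}<e^{2\tau\psi(2r)}$, and the positive gap $\psi(2r)-\psi(5r/2)$ is what the choice of $\tau$ trades against $\psi(r/4)-\psi(2r)$ to produce the exponent $s$. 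With that correction, plus a genuinely pseudoconvex radial weight such as $|x|^{-\beta}$ (as you note, $-\log|x|$ is only a limiting weight and needs convexification), your proof is complete and consistent with the references the paper cites.
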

We refer to \cite{BCJ} for a proof. The case of a general divergence form operator is detailed in \cite{CT} and \cite{Chou}.

\begin{proposition}\label{proposition3.1}
Let $s$ be as in the previous lemma and fix $M>0$. There exists a constant $C>0$ so that for any $\delta >0$,  $u\in H^1(\Omega )$ satisfying $\Delta u=0$, $\|u\|_{H^1(\Omega )}\leq M$ and,  $x$, $y\in \Omega ^\delta$, 
 \[
(Cr)^{1-s^N}\|u\|_{H^1(B(x,r))}\leq \|u\|_{H^1(B(y ,2r))}^{s^N},\;\; 0<r<\delta /6 ,
 \]
 where $N$ is the smallest integer satisfying $|x-y|-2Nr\le r$.
\end{proposition}

\begin{proof}
The proof is given for $\ell=1$. For an arbitrary $\ell$, the proof is quite similar with slight adaptation. Set
 \[
 d=|y-x |,\;\;  \eta =\frac{y-x}{|y-x|}
 \]
 and consider the sequence, where $0<2r<d$,
 \[
 x_k=y-k(2r)\eta ,\;\; k\geq 1.
 \]
Clearly,
 \[
 |x_k-x|=d-k(2r).
 \]
Let $N$ be the smallest integer such that $d-N(2r)\leq r$, or equivalently
 \[
\frac{d}{2r}-\frac{1}{2}\leq N <\frac{d}{2r}+\frac{1}{2}.
 \]
 
 By Lemma \ref{lemma3.1}, there exist $C>0$ and $0<s<1$ so that 
 \[
Cr\|u\|_{H^1(B(x_1,r))}\leq \|u\|_{H^1(B(y,r))}^{s},
\]
where we take into account that $B(x_1,r)\subset B(y,2r)$.

\smallskip
Similarly, we have 
 \[
Cr\|u\|_{H^1(B(x_{k+1},r))}\leq \|u\|_{H^1(B(x_k,r))}^{s},\;\; k\ge 1.
\]

Hence, an induction argument yields
 \begin{equation}\label{3.1}
(Cr)^{1-s^N}\|u\|_{H^1(B(x_N,2r))}\leq \|u\|_{H^1(B(y ,2r))}^{s^N}.
 \end{equation}

 Since $|x_N-x|=d-N(2r)\leq r$, $B(x,r)\subset B(x_N, 2r)$. Whence \eqref{3.1} entails
  \begin{equation}\label{3.4}
(Cr)^{1-s^N}\|u\|_{H^1(B(x,r))}\leq \|u\|_{H^1(B(y ,2r))}^{s^N}.
 \end{equation}

 \end{proof}

 We recall that according to Caccioppoli's inequality, for all $u\in H^1(\Omega )$ satisfying $\Delta u=0$ in $\Omega$ and $x\in \Omega$,   
 \[
 \|\nabla u\|_{L^2(B(x,2r))^n}\leq Cr^{-1}\|u\|_{L^2(B(x,3r))},
 \]
 for $0<3r<\textrm{dist}(x,\Gamma )$.
 
 \smallskip
 Therefore the following corollary is immediate from Proposition \ref{proposition3.1}.
 
 \begin{corollary}\label{corollary3.1}
Let $s$ be as in the previous lemma and fix $M>0$. There exists a constant $C>0$ so that for any $\delta >0$,  $u\in H^1(\Omega )$ satisfying $\Delta u=0$, $\|u\|_{H^1(\Omega )}\leq M$ and,  $x$, $y\in \Omega ^\delta$, 
 \[
(Cr)^{1-s^N}\|u\|_{H^1(B(x,r))}\leq \|u\|_{L^2(B(y ,3r))}^{s^N},\;\; 0<r<\delta /6 ,
 \]
 where $N$ is the smallest integer satisfying $|x-y|-2Nr\le r$.
 \end{corollary}

 
 
 
 \begin{corollary}\label{corollary3.2}
Fix $0<\alpha <1$ and $0<\eta \le M$, and set $\delta _0=\min ((\eta /(2M))^{1/\alpha},R/2)$. There exists a constant $c>0$ so that, for any $u\in H^1(\Omega )\cap C^{0,\alpha} (\overline{\Omega})$ satisfying
 $\Delta u=0$, $\|u\|_{H^1(\Omega )\cap C^{0,\alpha} (\overline{\Omega})}\leq M$ and $|u(\widetilde{x})|\ge \eta$, for some $\widetilde{x}\in \Gamma$, and $0<\delta \le \delta _0$,
 \[
e^{-ce^{\frac{c}{r}}}\leq \|u\|_{L^2(B(y ,r))},\;\; y\in \Omega^ {(\delta /2)\sin \theta} ,\; 0<r< (\delta /3)\sin \theta.
 \]
Here $R$ and $\theta$ are as in the definition of the {\bf UICP}.
 \end{corollary}
 
\begin{proof}
Let $u\in H^1(\Omega )\cap C^{0,\alpha} (\overline{\Omega})$, satisfying $\Delta u=0$, $\|u\|_{H^1(\Omega )\cap C^{0,\alpha} (\overline{\Omega})}\leq M$ and $|u(\widetilde{x})|\ge \eta$, for some $\widetilde{x}\in \Gamma$. If $z\in B(\widetilde{x},\delta _0)$ then
\begin{align*}
|u(z)|&\ge |u(\widetilde{x})|-|u(\widetilde{x})-u(z)|
\\
&\ge |u(\widetilde{x})|-|\widetilde{x}-z|^\alpha [u]_\alpha
\\
&\ge \eta -\delta ^\alpha M=\eta /2.
\end{align*}

Let $\xi =\xi (\widetilde{x})$  be as in the definition of {\bf UICP} and  set $x=\widetilde{x}+(\delta _0/2)\xi$. Since $B(x,(\delta _0/2)\sin \theta )\subset \mathcal{C}(x)\subset \Omega$, $x\in \Omega^{(\delta _0/2)\sin \theta}$. If $\delta \le \delta _0$, we get by applying Corollary \ref{corollary3.1}, for all $y\in \Omega^{(\delta /2)\sin \theta}$,
\begin{equation}\label{5.101}
(Cr)^{1-s^N}\|u\|_{H^1(B(x ,r/3))}\leq \|u\|_{L^2(B(y ,r/3))}^{s^N},\;\; 0<r<(\delta /3)\sin \theta ,
\end{equation}
where $N$ is the smallest integer so that $|x-y|-2Nr\leq r$.

\smallskip
But
\begin{equation}\label{5.102}
\|u\|_{H^1(B(x ,r/3))}\ge \|u\|_{L^2(B(x,r/3))}\ge (\eta /2)|\mathbb{S}^{n-1}|(r/3)^{n/2},
\end{equation}
due to the fact that $B(x,r/3)\subset B(\widetilde{x},\delta _0)$. Hence, after some computations, \eqref{5.101} and \eqref{5.102} yield
\begin{equation}\label{8.1}
(Cr)^{\varrho (N)}\le \|u\|_{L^2(B(y ,r))},\;\; 0<r<(\delta /3)\sin \theta ,
\end{equation}
with
\[
\varrho (N)= \frac{n/2+1}{s^N}-1.
\]
Shortening $C$ if necessary, we can assume that $Cr<1$. Hence
\begin{equation}\label{8.2}
(Cr)^{\varrho (N)}=e^{\left(-(n+1/2)e^{N|\ln s|}+1\right)|\ln (Cr)|}\ge e^{\left(-e^{\left((n+1/2)\left(\frac{D}{2r}+\frac{1}{2}\right)\right)|\ln s|}+1\right)|\ln (Cr)|}.
\end{equation}
Here $D=\mbox{diam}(\Omega )$.

\smallskip
The expected inequality is derived in a straightforward manner from \eqref{8.1} and \eqref{8.2}.
\end{proof}

If $\widetilde{x}\in \Gamma$, and $x_0=x_0(\widetilde{x})$ and $\rho$ are as in the definition of {\bf UESP}, we set
\[
\mathcal{B}(\widetilde{x},r)=B(x_0,r+\rho ),\;\; r>0.
\]

 \begin{theorem}\label{theorem5.1}
 Fix $0<\alpha <1$ and $0<\eta \le M$. Let $\delta _0$ be as in Corollary \ref{corollary3.2}. There exists a constant $c>0$ so that, for any $u\in H^2(\Omega )\cap C^{0,\alpha }(\overline{\Omega})$ satisfying
 $\Delta u=0$, $\|u\|_{H^2(\Omega )\cap C^{0,\alpha} (\overline{\Omega})}\leq M$ and $|u(\widetilde{x})|\ge \eta$, for some $\widetilde{x}\in \Gamma$, 
 \[
e^{-ce^{\frac{c}{r}}}\le \|u\|_{L^2(\mathcal{B}(\widehat{x},r)\cap \Gamma )}+\|\nabla v\|_{L^2(\mathcal{B}(\widehat{x},r)\cap \Gamma )^n} ,\; 0<r< r_0=\min (R,8\delta _0/\sin \theta ),
 \]
 for any $\widehat{x}\in \Gamma$, where $R$ and $\theta$ are as in the definition of {\bf UICP}
 \end{theorem}
 
\begin{proof}
From \cite[Corollary 3.1]{BCJ}, there exist two constants $C_0>0$ and $0<\beta <1/2$ so that, for any $v\in H^2(\Omega )$ satisfying $\Delta v=0$, $\widehat{x}\in \Gamma$ and $r>0$,
\begin{align}
&C_0r^2\|v\|_{H^1(\mathcal{B}(\widehat{x},\frac{r}{4})\cap\Omega )}\label{5.103}
\\
&\hskip 2cm \le \|v\|_{H^2(\Omega )}^{1-\beta}\left(\|u\|_{L^2(\mathcal{B}(\widehat{x},r)\cap \Gamma )}+\|\nabla v\|_{L^2(\mathcal{B}(\widehat{x},r)\cap \Gamma )^n}\right)^{\beta }.\nonumber
\end{align}

Let $u\in H^2(\Omega )\cap C^{0,\alpha} (\overline{\Omega})$, satisfying $\Delta u=0$, $\|u\|_{H^2(\Omega )\cap C^{0,\alpha} (\overline{\Omega})}\leq M$ and $|u(\widetilde{x})|\ge \eta$, for some $\widetilde{x}\in \Gamma$.

\smallskip
For $0<r<R$, let $y=\widehat{x}+(r/4)\xi$ with $\xi =\xi (\widehat{x})$. Then it is straightforward to check that $B(y,(r/4)\sin \theta )\subset \Omega ^{(r/4)\sin \theta}\cap \mathcal{B}(\widehat{x},r)$. Therefore by \eqref{5.103}
\begin{equation}\label{5.104}
Cr^2\|u\|_{L^2(B(y,(r/4)\sin \theta ))}\le \left(\|u\|_{L^2(\mathcal{B}(\widehat{x},r)\cap \Gamma )}+\|\nabla v\|_{L^2(\mathcal{B}(\widehat{x},r)\cap \Gamma )^n}\right)^{\beta }.
\end{equation}

On the other hand from Corollary \ref{corollary3.2}
\begin{equation}\label{5.105}
e^{-ce^{\frac{c}{\rho }}}\leq \|u\|_{L^2(B(y ,\rho ))},\;\;  0<\rho < (r /6)\sin \theta,
\end{equation}
provided that $\rho \le \delta_0$, where $\delta_0$ is defined in Corollary \ref{corollary3.2}.

\smallskip
Combine \eqref{5.104} and \eqref{5.105} with $\rho= (r/8)\sin \theta$ in order to get
\[
e^{-ce^{\frac{c}{r}}}\le \|u\|_{L^2(\mathcal{B}(\widehat{x},r)\cap \Gamma )}+\|\nabla v\|_{L^2(\mathcal{B}(\widehat{x},r)\cap \Gamma )^n}.
\]
\end{proof}

One can proceed as in the proof of  \cite[Theorem 4.1]{BCJ} to derive from the previous theorem the following corollary.

\begin{corollary}\label{corollary5.101}
Let $\Gamma _0$ be an open non empty subset of $\Gamma$. Fix $0<\alpha  <1$, $\Lambda _0>0$ and $0<\eta \le M$, and let $r_0$ be as in Theorem \ref{theorem5.1}. There exists a constant $c>0$ so that, for any $u\in H^2(\Omega )\cap C^{0,\alpha} (\overline{\Omega})$ satisfying
\begin{eqnarray*}
\left\{
\begin{array}{lll}
\Delta u=0\;\; \mbox{in}\; \Omega ,
\\
|\partial _\nu u|\leq \Lambda _0|u|\;\; \mbox{on}\;\; \Gamma _0 ,
\\
|u(\widetilde{x})|\ge \eta \;\; \mbox{for some}\; \widetilde{x}\in \Gamma ,
\\
\|u\|_{H^2(\Omega )\cap C^{0,\alpha} (\overline{\Omega})}\leq M,
\end{array}
\right.
\end{eqnarray*}
and $\widehat{x}\in \Gamma_0$,
\[
e^{-ce^{\frac{c}{r}}}\le \|u\|_{L^2(\mathcal{B}(\widehat{x},r)\cap \Gamma )},\;\; 0<r<r _0/2.
\]
\end{corollary}

Therefore, we can mimic the proof of \cite[Proposition 4.1]{BCJ}  to get the following result.
\begin{proposition}\label{proposition5.101}
Let $\Gamma _0$ be an open non empty subset of $\Gamma$. Fix $0<\alpha ,\beta <1$, $\Lambda _0>0$ and $0<\eta \le M$. There exist constants $C>0$ and $\sigma >0$ so that, for any $u\in H^2(\Omega )\cap C^{0,\alpha} (\overline{\Omega})$ satisfying
\begin{eqnarray*}
\left\{
\begin{array}{lll}
\Delta u=0\;\; \mbox{in}\; \Omega ,
\\
|\partial _\nu u|\leq \Lambda _0|u|\;\; \mbox{on}\;\; \Gamma _0 ,
\\
|u(\widetilde{x})|\ge \eta \;\; \mbox{for some}\; \widetilde{x}\in \Gamma ,
\\
\|u\|_{H^2(\Omega )\cap C^{0,\alpha} (\overline{\Omega})}\leq M,
\end{array}
\right.
\end{eqnarray*}
and for any $f\in C^{0,\beta}(\overline{\Gamma }_0)$ with $\|f\|_{C^{0,\beta}(\overline{\Gamma }_0)}\le M$,
\[
\|f\|_{L^\infty (\Gamma _0)}\leq C\left(\left|\ln \ln \left(\|fu\|_{L^\infty (\Gamma _0)}\right)\right|^{-\sigma}+\|fu\|_{L^\infty (\Gamma _0)}\right).
\]
\end{proposition}


\section{Proof of the stability estimate}

\begin{lemma}\label{lemma5.101}
There exist  a constant $\eta >0$, that can depend on $M$ and $g$, with the property that, for any $q\in \mathscr{Q}_M^0$, one finds $\widetilde{x}\in \overline{\gamma}$ so that $|u_q(\widetilde{x})|\ge \eta$.
\end{lemma}
    
\begin{proof}
Let $\gamma _0\Subset \gamma$ and $C_0$ be the constant in \eqref{5.11}. By \cite[Corollary 1]{Bo}, there exist three constants $C>0$, $\beta >0$ and $\delta >0$ so that for any $\lambda >0$ 
\[
C\|\lambda g\|_{L^\infty (\Gamma )}\le C_0\left| \ln \left(C_0^{-1}(\|u_q(\lambda g)\|_{L^2(\gamma _0)}+\|\nabla u_q(\lambda g)\|_{L^2(\gamma _0)}\right)\right|^{-\beta},
\]
whenever $\|u_q(\lambda g)\|_{L^2(\gamma _0)}+\|\nabla u_q(\lambda g)\|_{L^2(\gamma _0)}\le \delta$.

\smallskip
Using a cutoff function and an interpolation inequality we obtain
\[
\|u_q(\lambda g)\|_{L^2(\gamma _0)}+\|\nabla u_q(\lambda g)\|_{L^2(\gamma _0)}=\|u_q(\lambda g)\|_{H^1(\gamma _0)}\leq C_1\|u_q(\lambda g)\|_{L^2(\gamma )}.
\]
Then the choice of $\lambda >0$ so that $C_1C_0\lambda \leq \delta$ gives 
\[
C\lambda \|g\|_{L^\infty (\Gamma )}\leq C_0\left| \ln \left(C_0^{-1}\lambda \|u_q(g)\|_{L^2(\gamma )}\right)\right|^{-\beta}
\]
Hence
\[
C_0e^{-C_1\|g\|_{L^\infty (\Gamma )}^{-1/\beta}}\le \|u_q(g)\|_{L^2(\gamma )}\le |\gamma |^{1/2}\| u_q\|_{L^\infty (\gamma )}.
\]
This estimate implies that there exists $\widetilde{x}\in \overline{\gamma}$ so that
\[
\eta = |\gamma |^{-1/2}C_0e^{-C_1\|g\|_{L^\infty (\Gamma )}^{-1/\beta}}\le |u_q(\widetilde{x})|.
\]

\end{proof}

This lemma in combination with Proposition \ref{proposition5.101} yields

\begin{proposition}\label{proposition5.102}
Let $0<\beta <1$. There exist two constants $C>0$  and $\sigma >0$ so that, for any $q\in \mathscr{Q}_M^0$ and $f\in C^{0,\beta}(\Gamma )$ with $\|f\|_{C^{0,\beta}(\Gamma )} \le M$,
\begin{equation}\label{5.12}
\|f\|_{L^\infty (\gamma )}\leq C\left(\left|\ln \ln \left(\|fu_q\|_{L^\infty (\gamma )}\right)\right|^{-\sigma}+\|fu_q\|_{L^\infty (\gamma )}\right).
\end{equation}
\end{proposition}

\begin{proof}[Proof of Theorem \ref{theorem1.1}]
Let $v=\widetilde{u}-u$. Since $\Delta v=0$. The stability estimate for the Cauchy problem in \cite{Bo} (see also \cite{Chou}) yields
\begin{equation}\label{5.17}
\| v\|_{W^{1,\infty}(\Gamma )}\leq C\left(\left|\ln (\|v\|_{L^2(\gamma )})\right|^{-\beta}+\|v\|_{L^2(\gamma )}\right),
\end{equation}
for some constants $C>0$ and $\beta >0$.

But
\begin{equation}\label{5.18}
(q-\widetilde{q})u=\partial_\nu v+\widetilde{q}v.
\end{equation}

Hence \eqref{5.17}  yields
\begin{equation}\label{5.19}
\|(q-\widetilde{q})u\|_{L^\infty (\Gamma )}\le C\left(\left|\ln (\|v\|_{L^2(\gamma )})\right|^{-\beta}+\|v\|_{L^2(\gamma )}\right).
\end{equation}
In light of \eqref{5.17}, we end up getting
\[
\|q-\widetilde{q}\|_{L^\infty (\Gamma )}\le C_0 \Psi \left(C_1 \Phi \left(\| u-\widetilde{u}\|_{L^2(\gamma )}\right)\right).
\]
The proof of Theorem \ref{theorem1.1} is then complete.
\end{proof}


\vskip .5cm

\end{document}